\newtheorem{thm}{Theorem}[section]
\newtheorem{prop}[thm]{Proposition}
\newtheorem{lem}[thm]{Lemma}
\newtheorem{cor}[thm]{Corollary}
\theoremstyle{definition}
\newtheorem{example}[thm]{Example}
\theoremstyle{remark}
\newtheorem{remark}[thm]{Remark}
\numberwithin{equation}{section}
\newcommand{\bQ}{\mathbb{Q}}
\newcommand{\bP}{\mathbb{P}}
\newcommand\OO{{\mathcal{O}}}
\newcommand{\roundup}[1]{\lceil{#1}\rceil}
\newcommand{\bC}{{\mathbb C}}
\newcommand{\bR}{{\mathbb R}}
\newcommand{\bN}{{\mathbb N}}
\newcommand\PP{{\mathcal{P}}}
\newcommand\III{{\mathcal{I}}}
\newcommand\mult{{\text{\rm mult}}}
\newcommand\vol{{\text{\rm vol}}}
\newcommand\lct{{\text{\rm lct}}}
\begin{document}

\title{Boundedness of $\bQ$-Fano varieties with degrees and alpha-invariants bounded from below}
\date{\today}
\author{Chen Jiang}
\address{Kavli IPMU (WPI), UTIAS, The University of Tokyo, Kashiwa, Chiba 277-8583, Japan.}
\email{chen.jiang@ipmu.jp}
\thanks{The author was supported by JSPS KAKENHI Grant Number JP16K17558 and World Premier International Research Center Initiative (WPI), MEXT, Japan.}

\begin{abstract}
We show that $\bQ$-Fano varieties of fixed dimension with anti-canonical degrees and alpha-invariants bounded from below form a bounded family. As a corollary, K-semistable $\bQ$-Fano varieties of fixed dimension with anti-canonical degrees bounded from below form a bounded family.
\end{abstract}
\keywords{Fano varieties, boundedness, K-stability, degrees, alpha-invariants}
\subjclass[2010]{14J45, 14C20}
\maketitle

\pagestyle{myheadings} \markboth{\hfill   Chen Jiang
\hfill}{\hfill Boundedness of $\bQ$-Fano varieties with degrees and $\alpha$-invariants bounded from below\hfill}

\section{Introduction}
Throughout the article, we work over an algebraically closed field of characteristic zero. 
 A {\it $\bQ$-Fano variety} is defined to be a normal projective variety $X$ with at most klt singularities 
such that the anti-canonical divisor $-K_X$ is an ample $\bQ$-Cartier divisor.

When the base field is the
complex number field, an interesting problem for $\bQ$-Fano varieties is the existence of K\"ahler--Einstein metrics which  is related to K-(semi)stability of $\bQ$-Fano varieties.  It has been known that a {\it Fano manifold} $X$ (i.e., a smooth $\bQ$-Fano variety over $\bC$) admits
K\"ahler--Einstein metrics if and only if $X$ is {\it K-polystable} by the works
\cite{DT, Tia97, Don02, Don05, CT08, Sto09, Mab08, Mab09, Ber16} and
\cite{CDS15a, CDS15b, CDS15c, Tia15}. K-stability is stronger than K-polystability, and K-polystability
is stronger than K-semistability. Hence K-semistable $\bQ$-Fano varieties are interesting for both differential geometers and algebraic geometers.

It also turned out that K\"ahler--Einstein metrics and K-stability play crucial roles for construction of nice moduli spaces of certain $\bQ$-Fano varieties. For example,  compact moduli spaces of
smoothable K\"ahler--Einstein $\bQ$-Fano varieties have been constructed (see \cite{OSS16} for dimension two case and \cite{LWX14, SSY16, Oda15} for higher dimensional case).
In order to consider the moduli space of certain (singular) $\bQ$-Fano varieties, the first step is to show the boundedness property, which is the motivation of this paper.  We show the boundedness of  K-semistable $\bQ$-Fano varieties of fixed dimension with anti-canonical degrees bounded from below, which gives an affirmative answer to a question asked by Yuchen Liu during the AIM workshop ``Stability and moduli spaces" in January 2017.

\begin{thm}\label{bdd2}
Fix a positive integer $d$ and a real number $\delta>0$. Then the set of $d$-dimensional K-semistable $\bQ$-Fano varieties $X$ with $(-K_X)^d>\delta$ forms a bounded family. 
\end{thm}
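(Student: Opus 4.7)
The plan is to derive Theorem~\ref{bdd2} as an immediate corollary of the main boundedness result of the paper, which asserts that $d$-dimensional $\bQ$-Fano varieties with $(-K_X)^d > \delta$ and $\alpha(X) \geq \epsilon$ form a bounded family. The one missing ingredient is a uniform lower bound on the alpha-invariant of K-semistable $\bQ$-Fano varieties of dimension $d$, depending only on $d$. The target bound is
\[
\alpha(X) \geq \frac{1}{d+1}
\]
for every K-semistable $d$-dimensional $\bQ$-Fano $X$; feeding this into the main theorem with $\epsilon := 1/(d+1)$ yields the stated boundedness.

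To establish the lower bound I would work valuatively, using the formula $\alpha(X) = \inf_E A_X(E)/T_X(E)$ in which $E$ runs over prime divisors over $X$, $A_X(E)$ is the log discrepancy, and $T_X(E) = \sup\{t \geq 0 \mid -\pi^{\ast}K_X - tE \text{ is pseudo-effective}\}$ for a resolution $\pi$ extracting $E$. Alongside this I would introduce the companion invariant
\[
S_X(E) = \frac{1}{(-K_X)^d}\int_0^{T_X(E)}\vol(-\pi^{\ast}K_X - tE)\,dt.
\]
Two inputs then suffice. First, K-semistability forces $A_X(E) \geq S_X(E)$ for every such $E$: to each $E$ one attaches a test configuration via its Rees filtration, whose Donaldson--Futaki invariant is a positive multiple of $A_X(E) - S_X(E)$, so nonnegativity of DF forces the inequality (this is the divisorial incarnation of the work of Fujita and Li). Second, the function $t \mapsto \vol(-\pi^{\ast}K_X - tE)^{1/d}$ is concave on $[0,T_X(E)]$, with boundary values $((-K_X)^d)^{1/d}$ at $t=0$ and $0$ at $t=T_X(E)$; hence $\vol(-\pi^{\ast}K_X - tE) \geq (-K_X)^d \cdot (1 - t/T_X(E))^d$ throughout, and integrating yields the universal inequality $S_X(E) \geq T_X(E)/(d+1)$. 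Chaining these two estimates gives $A_X(E)/T_X(E) \geq 1/(d+1)$ for every $E$, hence $\alpha(X) \geq 1/(d+1)$ as desired.

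Two small technical points require checking: that the valuative formula for $\alpha(X)$ used above coincides with the classical definition through log canonical thresholds of effective $\bQ$-complements of $-K_X$ (standard, since log canonical thresholds are computed by divisorial valuations), and that the test-configuration-to-valuation translation yielding $A_X(E) \geq S_X(E)$ is valid in the singular $\bQ$-Fano setting. Neither presents a serious obstacle; the genuine content in the paper is the main theorem producing boundedness from the combined alpha- and volume-lower bounds, which Theorem~\ref{bdd2} then invokes here as a black box.
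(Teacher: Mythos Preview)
Your proposal is correct and follows essentially the same route as the paper: both reduce Theorem~\ref{bdd2} to Theorem~\ref{bdd1} via the lower bound $\alpha(X)\geq 1/(d+1)$ for K-semistable $\bQ$-Fano varieties. The only difference is that the paper simply cites this bound from Fujita--Odaka \cite[Theorem~3.5]{FO}, whereas you sketch a (correct, valuative) proof of it; one cosmetic point is that Theorem~\ref{bdd1} is stated with strict inequality $\alpha(X)>\delta$, so you should shrink $\delta$ to $\min\{\delta,\,1/(d+2)\}$ (or argue as the paper does by assuming $\delta<1/(d+1)$) before invoking it.
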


Note that the assumption that $(-K_X)^d$ is bounded from below is necessary, by Example \ref{ex}(2) later. 

As mentioned before, one might have further applications of Theorem \ref{bdd2} such as constructing moduli spaces of  $d$-dimensional K-semistable $\bQ$-Fano varieties with bounded anti-canonical degrees. An interesting corollary of Theorem \ref{bdd2} is the discreteness of the anti-canonical degrees of K-semistable $\bQ$-Fano varieties.
\begin{cor}\label{vol finite}
Fix a positive integer $d$. Then the set
of $(-K_X)^d$ for 
$d$-dimensional K-semistable $\bQ$-Fano varieties $X$ is finite away from $0$.
\end{cor}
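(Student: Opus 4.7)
The plan is to reduce the finiteness assertion to a counting argument by combining Theorem \ref{bdd2} with the standard fact that boundedness forces a uniform Cartier index on the anti-canonical divisor. Fix an arbitrary $\delta>0$. By Theorem \ref{bdd2}, the set $\mathcal{F}_\delta$ of $d$-dimensional K-semistable $\bQ$-Fano varieties $X$ with $(-K_X)^d>\delta$ is parametrized (up to isomorphism) by the geometric fibers of some projective morphism $\mathcal{X}\to T$ with $T$ of finite type. It suffices to prove that the set
\[
V_\delta=\{(-K_X)^d : X\in\mathcal{F}_\delta\}
\]
is finite, since then letting $\delta\to 0$ gives finiteness of the full volume set away from $0$.

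Next I would extract two numerical consequences from boundedness. First, since $T$ is of finite type and the relative dualizing sheaf $\omega_{\mathcal{X}/T}$ is $\bQ$-Cartier fiberwise, a standard Noetherian/generic flatness argument (stratifying $T$ and passing to reflexive powers) yields a positive integer $N=N(\delta)$ such that $-NK_X$ is Cartier for every $X\in\mathcal{F}_\delta$. Second, the anti-canonical degree is an upper-semicontinuous (in fact, locally constant on each stratum) function on $T$, so there is a uniform upper bound $M=M(\delta)$ with $(-K_X)^d\le M$ for all $X\in\mathcal{F}_\delta$.

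Once these constants are in hand, the conclusion is immediate: for every $X\in\mathcal{F}_\delta$ the number $(-NK_X)^d$ is an intersection number of the Cartier divisor $-NK_X$ on a projective variety, hence a nonnegative integer, and therefore
\[
(-K_X)^d \in \tfrac{1}{N^d}\ZZ \cap (\delta,M].
\]
The right-hand side is a finite set, so $V_\delta$ is finite. Letting $\delta$ range over a sequence tending to $0$ shows that $0$ is the only possible accumulation point of the set of volumes, which is the claimed finiteness away from $0$.

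The only nontrivial point in this plan is the existence of a uniform Cartier index $N$ on a bounded family of $\bQ$-Fano varieties; everything else is bookkeeping. This step, however, is a well-known consequence of boundedness (any bounded family of klt $\bQ$-Fano varieties has bounded Gorenstein/Cartier index), and once accepted the corollary follows at once.
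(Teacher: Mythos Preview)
Your argument is correct and follows the same route as the paper: apply Theorem~\ref{bdd2} to obtain boundedness, then deduce that only finitely many values of $(-K_X)^d$ can occur. The paper states this in one sentence without spelling out the uniform Cartier index and upper bound on the degree, whereas you make these standard consequences of boundedness explicit; apart from that level of detail, the proofs coincide.
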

Here a set $\PP$ of positive real numbers is {\it finite away from $0$} if for any $\delta>0$, $\PP\cap (\delta, \infty)$ is a finite set. We remark that Corollary \ref{vol finite} might be related to the conjectural discreteness of minimal normalized volumes of klt singularities, cf. \cite[Question 4.3]{LX17}.

The idea of proof of Theorem \ref{bdd2} comes from birational geometry. According to Minimal Model Program, $\bQ$-Fano varieties form a fundamental class in birational geometry, and the boundedness property for $\bQ$-Fano varieties are also interesting from the point view of birational geometry. For example, Koll\'ar, Miyaoka, and Mori \cite{KMM92} proved that smooth Fano varieties form a bounded family. The most celebrated progress recently is the proof of Borisov--Alexeev--Borisov Conjecture due to Birkar \cite{Bir16a, Bir16b}, which says that given a positive integer $d$ and a real number $\epsilon>0$, the set of $\epsilon$-lc $\bQ$-Fano varieties of dimension $d$ forms a bounded family.

In this paper, inspired by Birkar's work, in order to show Theorem \ref{bdd2},
we show the following theorem.
\begin{thm}\label{bdd1}
Fix a positive integer $d$ and a real number $\delta>0$. Then the set of $d$-dimensional $\bQ$-Fano varieties $X$ with $(-K_X)^d>\delta$ and $\alpha(X)>\delta$ forms a bounded family. 
\end{thm}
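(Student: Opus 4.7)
My plan is to reduce Theorem~\ref{bdd1} to Birkar's theorem \cite{Bir16a, Bir16b} on boundedness of $\epsilon$-lc $\bQ$-Fano varieties of fixed dimension: it suffices to show that under the hypotheses, $X$ is automatically $\epsilon$-klt for some $\epsilon = \epsilon(d,\delta) > 0$. As a preliminary step, I would observe that the alpha-invariant lower bound already implies an upper bound on the degree. For any smooth closed point $x \in X$ and sufficiently divisible $m$, a Riemann--Roch / dimension count produces a section of $\OO_X(-mK_X)$ vanishing to order at least $m \, ((-K_X)^d)^{1/d} - o(m)$ at $x$; rescaling gives an effective $\bQ$-divisor $D_x \sim_\bQ -K_X$ with $\mult_x D_x \ge ((-K_X)^d)^{1/d}$. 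Combined with the standard bound $\lct(X; D_x) \le d / \mult_x D_x$, this yields $\alpha(X) \le d / ((-K_X)^d)^{1/d}$, and hence $(-K_X)^d \le (d/\delta)^d$.

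The main step is to promote the alpha-invariant bound to an $\epsilon$-klt bound. Writing
\[
\alpha(X) \;=\; \inf_E \frac{a(E,X)}{T(E)},
\]
where $E$ ranges over all divisors over $X$ and $T(E) = \sup\{t \ge 0 : -\pi^* K_X - tE \text{ is pseudoeffective}\}$ on a log resolution $\pi \colon Y \to X$ extracting $E$, the hypothesis reads $a(E, X) \ge \delta \, T(E)$ for every such $E$; it therefore suffices to bound $T(E)$ from below uniformly in $E$. Applying the Lazarsfeld--Musta\c{t}\u{a} volume integration formula
\[
(-K_X)^d \;=\; d \int_0^{T(E)} \vol_{Y|E}\bigl( -\pi^* K_X - sE \bigr) \, ds,
\]
together with suitable intersection-theoretic upper bounds on the restricted volume $\vol_{Y|E}$, one should derive $T(E) \ge \epsilon'(d, \delta) > 0$, leveraging the two-sided degree bounds ($(-K_X)^d$ bounded above by the preliminary step and below by assumption). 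Consequently $a(E, X) \ge \delta \epsilon'$ for every divisor $E$ over $X$, so $X$ is $\epsilon$-klt with $\epsilon = \delta \epsilon'$, and the boundedness then follows immediately from Birkar's theorem.

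The principal obstacle is the uniform lower bound on $T(E)$ in the main step: while the integration formula is a clean identity, the required upper bound on $\vol_{Y|E}(-\pi^* K_X - sE)$ is delicate when the center of $E$ on $X$ has high codimension, since the exceptional geometry enters through the self-intersections of $E$. This likely requires an induction on the codimension of the center of $E$, together with careful transfer of volume estimates across different resolutions, perhaps modeled on the techniques Birkar developed for bounding complements.
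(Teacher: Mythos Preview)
Your preliminary step (the upper bound $(-K_X)^d \le (d/\delta)^d$) is correct and is in fact used in the paper. The main step, however, is not a proof: you yourself flag that the lower bound on $T(E)$ is the ``principal obstacle'' and only gesture at what its proof ``likely requires''. This is a genuine gap, not a routine detail. The integration identity
\[
(-K_X)^d \;=\; d\int_0^{T(E)} \vol_{Y|E}\bigl(-\pi^*K_X - sE\bigr)\,ds
\]
only yields a lower bound on $T(E)$ if you can bound the restricted volume from above, and there is no such a priori bound: when the center of $E$ is a point, the restricted volume is governed by the self-intersection numbers of $E$ on the extraction, which in turn encode exactly the singularity you are trying to control. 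Concretely, if $X$ had a point of very small minimal log discrepancy realized by $E_0$, then $T(E_0)\le A_X(E_0)/\delta$ would also be small, and the identity would simply force the restricted volume to be correspondingly large; nothing in your outline rules this out. (Even on a fixed smooth $X$ one already sees that $T(E)$ is not bounded below: on $\bP^3$, blowing up a complete intersection curve of type $(e,e)$ gives $T(E)\approx 4/e\to 0$, so any uniform bound must use more than the integration formula.) In short, the reduction to $\epsilon$-klt is not easier than the theorem itself, and the sketch does not supply the missing input.

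The paper proceeds quite differently and never attempts to prove $\epsilon$-klt directly. Instead it proves an \emph{effective birationality} statement (Theorem~\ref{eff bir}): there is $m=m(d,\delta)$ such that $|-mK_X|$ is birational. The key new ingredient is Lemma~\ref{K.G}, which shows that for a covering family of $k$-dimensional subvarieties $G\subset X$ one has
\[
(-K_X)^k\cdot G \;\ge\; \frac{\alpha(X)^{d-k}}{\binom{d}{k}(d-k)^{d-k}}\,(-K_X)^d,
\]
by a section-counting argument comparing $h^0(-lmK_X)$ with the number of conditions imposed by vanishing to order $m$ along $G$, and then invoking the definition of $\alpha(X)$. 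This lower bound feeds into the standard non-klt-center machinery (Proposition~\ref{nonklt}) to force the centers down to points, giving potential birationality of a fixed multiple of $-K_X$. Theorem~\ref{bdd1} then follows from Birkar's criterion (Theorem~\ref{thm bir1}) together with the complement theorem (Theorem~\ref{thm bir2}), your degree upper bound, and the observation that $\alpha(X)>\delta$ gives the required klt condition on $(X,\tfrac{\delta}{l}L)$ for $L\in|-lK_X|$. This route sidesteps entirely the question of bounding $T(E)$ or the minimal log discrepancy.
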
 
Here $\alpha(X)$ is the {\it alpha-invariant}  of $X$ defined by Tian \cite{Tia87} (see also  \cite{Dem08}) in order to investigate the existence of K\"ahler--Einstein metrics on Fano manifolds. Recall that Fujita and Odaka  \cite[Theorem 3.5]{FO} proved that the alpha-invariant of a K-semistable $\bQ$-Fano variety of dimension $d$ is always not less than $1/(d+1)$, so Theorem \ref{bdd1} implies Theorem \ref{bdd2} naturally. The advantage to consider Theorem \ref{bdd1} is that we can then apply methods from birational geometry, instead of dealing with K-semistable $\bQ$-Fano varieties.

The point of Theorem \ref{bdd1} is that we replace the $\epsilon$-lc condition in Borisov--Alexeev--Borisov Conjecture by the condition on lower bound of anti-canonical degrees and alpha-invariants, which are global invariants. 

We remark that if one take $\delta=1$, then Theorem \ref{bdd1} is a consequence of \cite[Theorem 1.3]{Bir16a}, which says that the set of {\it exceptional} $\bQ$-Fano varieties (i.e., $\bQ$-Fano varieties $X$ with $\alpha(X)>1$) of fixed dimension forms a bounded family. Note that in this case we even do not need to assume $(-K_X)^d$ is bounded from below. But in general we need to assume both $(-K_X)^d$ and $\alpha(X)$ are bounded from below, by the following examples.
\begin{example} \label{ex}Fix a positive integer $d$.
\begin{enumerate}
\item Consider the weighted projective space $X_n=\bP(1^{d}, n)$ which is a $\bQ$-Fano variety of dimension $d$ with $(-K_{X_n})^d=(n+d)^d/n>1$, but it is clear that $\{X_n\}$ does not form a bounded family.
\item Consider $Y_{8n+4}\subset \bP(2,2n+1,2n+1,4n+1)$, a general weighted hypersurface of degree $8n+4$, which is a $\bQ$-Fano variety of dimension $2$ with $\alpha(Y_{8n+4})=1$ (see \cite[Corollary 1.12]{CPS10} or \cite{JK01}), but it is clear that $\{Y_{8n+4}\}$ does not form a bounded family. For more interesting examples of $\bQ$-Fano varieties with $\alpha\geq 1$, we refer to \cite{CPS10, CS13} in dimension $2$ and \cite{CS11, CS14} in higher dimensions. Note that all examples with $\alpha\geq 1$ are K-semistable (in fact, K-stable) by  \cite[Theorem 1.4]{OS12} (or \cite{Tia87}).
\end{enumerate}
\end{example}

By \cite[Proposition 7.13]{Bir16a} or \cite[Theorem 2.15]{Bir16b},  Theorem \ref{bdd1} is a consequence of the following theorem.
\begin{thm}\label{eff bir}
Fix a positive integer $d$ and a real number $\delta>0$. Then there exists a positive integer $m$ depending only on $d$ and $\delta$  such that if $X$ is a $d$-dimensional $\bQ$-Fano variety with $(-K_X)^d>\delta$ and $\alpha(X)>\delta$, then $|-mK_X|$ defines a birational map.
\end{thm}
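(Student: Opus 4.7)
The plan is to establish effective birationality by the classical technique of creating an isolated non-klt centre at a general point. Pick two very general smooth points $x_1,x_2 \in X$. The goal is to construct an effective $\bQ$-divisor $\Delta \sim_{\bQ} -\lambda K_X$, with $\lambda$ depending only on $d$ and $\delta$, such that $(X,\Delta)$ is not klt at $x_1$, is klt on a punctured neighbourhood of $x_1$, and is klt at $x_2$. Nadel-type Kawamata--Viehweg vanishing applied to $\mathcal{J}(X,\Delta)\otimes\OO_X(-mK_X)$ then yields, for any integer $m > \lambda$, a surjection
\[
H^0\bigl(X,\OO_X(-mK_X)\bigr)\twoheadrightarrow H^0\bigl(\OO_Z(-mK_X)\bigr),
\]
where $Z$ is the cosupport of $\mathcal{J}(X,\Delta)$. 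Since $Z$ contains $x_1$ as an isolated component and misses $x_2$, the rational map defined by $|-mK_X|$ separates the two points, hence is birational.

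First I would use the volume bound to manufacture a highly singular section at $x_1$. Asymptotic Riemann--Roch together with Kawamata--Viehweg vanishing applied to the big and nef $-K_X$ give $h^0(X,-nK_X) \geq \tfrac{\delta}{d!}n^d$ for all sufficiently divisible $n \gg 0$, and a jet-space dimension count at the smooth point $x_1$ then produces $D_n \in |-nK_X|$ with $\mult_{x_1}D_n \geq n\delta^{1/d}(1-o(1))$; genericity of $x_2$ lets us insist $x_2 \notin \mathrm{Supp}(D_n)$. Setting $D := \tfrac{1}{n}D_n \sim_{\bQ} -K_X$, the pair $(X,tD)$ fails to be klt at the smooth point $x_1$ as soon as $t\cdot\mult_{x_1}(D) \geq d$. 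Writing $t_1 := \lct_{x_1}(X,D)$ for the local log canonical threshold, the hypothesis $\alpha(X) > \delta$ forces $t_1 > \delta$, whereas the multiplicity estimate forces $t_1 \leq d\,\delta^{-1/d}$; thus $(X,t_1 D)$ is strictly log canonical at $x_1$ with some minimal lc centre $V_1 \ni x_1$ of dimension $k_1 \in \{0,1,\ldots,d-1\}$.

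The decisive step is tie-breaking on the centre $V_1$. If $k_1 \geq 1$, I would cut it down iteratively: at stage $i$, given a current minimal lc centre $V_i \ni x_1$ of dimension $k_i \geq 1$, construct an auxiliary divisor $E_i \sim_{\bQ} -\mu_i K_X$ vanishing to high order along $V_i$ at $x_1$, add a small multiple $\epsilon_i E_i$ to the current boundary while slightly lowering the previous coefficient, and then extract a strictly smaller minimal lc centre $V_{i+1} \ni x_1$ by passing to the new log canonical threshold. Producing $E_i$ amounts to finding a section of $\OO_X(-nK_X) \otimes \mathcal{I}_{V_i}^{a}$ of prescribed order at $x_1$, which a Riemann--Roch count on $X$ weighed against one on $V_i$ supplies as soon as $(-K_X|_{V_i})^{k_i}$ is bounded above by a function of $d$ and $\delta$. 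After at most $d$ reductions the centre collapses to $\{x_1\}$, and the accumulated boundary $\Delta \sim_{\bQ} -\lambda K_X$ has $\lambda = \lambda(d,\delta)$ bounded.

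The main obstacle is precisely this uniform upper bound on $(-K_X|_{V_i})^{k_i}$ at every stage of tie-breaking. In the Borisov--Alexeev--Borisov framework of \cite{Bir16a,Bir16b} such a bound is supplied by the $\epsilon$-lc hypothesis via effective discrepancy estimates; here it must instead be extracted from $\alpha(X) > \delta$. The underlying idea is that an $E_i$ of excessive multiplicity along $V_i$ would, after rescaling back to $-K_X$, produce a $\bQ$-divisor $\sim_{\bQ} -K_X$ whose log canonical threshold at the generic point of $V_i$ drops below $\delta$, contradicting the alpha-invariant hypothesis; making this quantitative bounds $(-K_X|_{V_i})^{k_i}$ in terms of $d$ and $\delta$. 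Once the iteration is executed with $\mu_i,\epsilon_i$ controlled across the $\leq d$ reductions, taking any integer $m > \lambda$ yields the required $m = m(d,\delta)$.
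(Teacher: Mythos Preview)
Your overall strategy---produce an isolated non-klt centre at a general point and then invoke the potentially-birational machinery---is the paper's, and the key insight you isolate in the final paragraph (that $\alpha(X)>\delta$ constrains the restricted volume $(-K_X|_{V})^{\dim V}$ for a covering family of subvarieties $V$) is precisely the content of the paper's main lemma (Lemma~\ref{K.G}). However, you have the direction of that bound reversed, and this derails your tie-breaking step.

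Concretely: the Riemann--Roch count you allude to shows that for a general $k$-dimensional member $V$ of a covering family one \emph{can} construct $E\sim_\bQ -K_X$ with $\mult_{V}E$ at least of order $\bigl((-K_X)^d/\tbinom{d}{k}(-K_X|_{V})^{k}\bigr)^{1/(d-k)}$; this is a lower bound on the achievable multiplicity. On the other hand $\alpha(X)>\delta$ forces $\mult_{V}E\le (d-k)/\delta$ for every such $E$, an upper bound. Combining the two squeezes $(-K_X|_{V})^{k}$ from \emph{below}, yielding
\[
(-K_X|_{V})^{k}\ \ge\ \frac{\alpha(X)^{d-k}}{\tbinom{d}{k}(d-k)^{d-k}}\,(-K_X)^d,
\]
which is exactly Lemma~\ref{K.G}. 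It does not give the upper bound you claim to need.

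This matters because your description of tie-breaking asks for the wrong thing. A section of $\OO_X(-nK_X)\otimes\mathcal{I}_{V_i}^{a}$ vanishes uniformly along all of $V_i$ and therefore cannot distinguish $x_1$ from a generic point of $V_i$; adding a multiple of such a divisor will not shrink the minimal centre through $x_1$. The standard inductive cut requires a divisor that is \emph{more} singular at $x_1$ than along $V_i$ generically, manufactured (via subadjunction/extension) from a divisor on $V_i$ with high multiplicity at $x_1$; that demands a \emph{lower} bound on $\vol(-K_X|_{V_i})$---which is exactly what the $\alpha$-invariant delivers. The paper avoids rerunning the induction by hand: it feeds $D=-qK_X$ and $A=-pK_X$ (with explicit $p,q$ depending only on $d,\delta$) into Birkar's packaged construction (Proposition~\ref{nonklt}), which outputs centres $G$ that are either points or satisfy $(-pK_X|_G)^{\dim G}\le d^d$, and then uses the lower bound of Lemma~\ref{K.G} to exclude the positive-dimensional case. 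Once you flip the sign of your volume bound and feed it into the standard cutting-down (or Proposition~\ref{nonklt}), your argument and the paper's coincide.
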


To show Theorem \ref{eff bir}, our main idea is to establish an inequality expressed in terms of the volume of $-K_X|_G$ on a covering family of subvarieties $G$ of $X$ and $(-K_X)^d$, $\alpha(X)$, see Lemma \ref{K.G}.

As a variation of Theorem \ref{bdd1}, we can also show the following theorem.
\begin{thm}\label{bdd3}
Fix a positive integer $d$ and a real number $\theta>0$. Then the set of $d$-dimensional $\bQ$-Fano varieties $X$ with $\alpha(X)^d\cdot (-K_X)^d>\theta$ forms a bounded family. 
\end{thm}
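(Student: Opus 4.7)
The plan is to mirror the proof of Theorem \ref{bdd1}, replacing the two separate lower bounds on $(-K_X)^d$ and $\alpha(X)$ by the combined lower bound on their product. First I would reduce Theorem \ref{bdd3} to an effective birationality statement: there exists $m = m(d,\theta)$ such that $|-mK_X|$ defines a birational map whenever $X$ is a $d$-dimensional $\bQ$-Fano variety with $\alpha(X)^d\cdot(-K_X)^d > \theta$. Given such a uniform $m$, boundedness is immediate from \cite[Proposition 7.13]{Bir16a} (equivalently \cite[Theorem 2.15]{Bir16b}), by the same route used to deduce Theorem \ref{bdd1} from Theorem \ref{eff bir}.

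To prove this effective birationality, I would rerun the argument for Theorem \ref{eff bir} while tracking how $\alpha(X)$ and $(-K_X)^d$ enter the key inequality of Lemma \ref{K.G} that controls $\vol(-K_X|_G)$ on a covering family $G\subset X$. The aim is to reorganise the estimate so that the dependence on the two invariants appears only through the scale-matched combination $\alpha(X)^d \cdot (-K_X)^d$. This is plausible because the non-klt centres produced by tie-breaking are cut out by divisors $D \sim_\bQ -\lambda m K_X$ whose singularity thresholds are controlled by $\alpha(X)$, whereas $\vol(-K_X|_G)$ scales dimensionally against $(-K_X)^d$; a single free scaling parameter $\lambda$ balances the two, and the optimal choice should pair the $d$-th power of $\alpha(X)$ with one copy of $(-K_X)^d$. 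Once the estimate is in this form, the hypothesis $\alpha(X)^d(-K_X)^d > \theta$ yields a uniform lower bound on $\vol(-K_X|_G)$, and the remainder of the proof of Theorem \ref{eff bir} goes through verbatim.

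The main obstacle is verifying this scaling-matching. A useful auxiliary input is the universal upper bound $\alpha(X)^d \cdot (-K_X)^d \leq d^d$, obtained from Riemann--Roch plus a multiplicity argument at a general smooth point $x\in X$: since $h^0(-mK_X) \sim m^d (-K_X)^d/d!$ one can force $D\in |-mK_X|$ with $\mult_x(D)\geq \lfloor m((-K_X)^d)^{1/d}\rfloor$, giving $\alpha(X) \leq d/((-K_X)^d)^{1/d}$. This pins the hypothesis into the band $\theta < \alpha(X)^d(-K_X)^d \leq d^d$, showing the two invariants are already coupled up to a bounded constant and providing the slack needed if the scaling in Lemma \ref{K.G} does not match $d$ on the nose. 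As a fallback one can case split: when $\alpha(X) > 1$ the variety $X$ is exceptional and boundedness follows from \cite[Theorem 1.3]{Bir16a}; when $\alpha(X) \leq 1$ the hypothesis forces $(-K_X)^d > \theta$, putting one closer to the setting of Theorem \ref{bdd1}, where only a lower bound on $\alpha(X)$ remains to be extracted from the coupling above.
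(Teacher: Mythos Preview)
Your fallback in the last paragraph is essentially the paper's actual route, but you stop exactly where the real work begins. You correctly reduce to the case $\alpha(X)\le 1$, so that $(-K_X)^d>\theta$, and then say ``only a lower bound on $\alpha(X)$ remains to be extracted from the coupling above.'' But the coupling $\theta<\alpha(X)^d(-K_X)^d\le d^d$ cannot by itself produce such a bound: it only gives $\alpha(X)>(\theta/(-K_X)^d)^{1/d}$, and $(-K_X)^d$ is not a priori bounded above --- indeed, bounding it above \emph{is} bounding $\alpha(X)$ below, via the very same inequality. This is circular, and nothing in Lemma~\ref{K.G} or its rescalings breaks the circle.

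The paper supplies the missing mechanism, and it is the Shokurov--Koll\'ar connectedness lemma. One runs Proposition~\ref{nonklt} with the $\alpha(X)$-rescaled divisors $D=-q\alpha(X)K_X$ and $A=-p\alpha(X)K_X$ (your ``free scaling parameter $\lambda$'' is literally $\alpha(X)$), and Lemma~\ref{K.G} then forces the covering non-klt centres to be zero-dimensional, exactly as you anticipate. This produces, for general $x,y$, an effective $\Delta\sim_{\bQ}-(q+p(d-1))\alpha(X)K_X$ with $\{x\}$ an isolated component of $\mathrm{Nklt}(X,\Delta)$ and $y\in\mathrm{Nklt}(X,\Delta)$ as well. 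Since the non-klt locus is disconnected, connectedness forces $-(K_X+\Delta)$ to be non-ample, i.e.\ $(q+p(d-1))\alpha(X)\ge 1$. That inequality is the uniform lower bound on $\alpha(X)$; with it in hand one simply invokes Theorem~\ref{bdd1}.

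Your main approach has a separate gap at the deduction step. Even granting a uniform $m$ with $|{-}mK_X|$ birational, the ``same route'' from Theorem~\ref{eff bir} to Theorem~\ref{bdd1} passes through Theorem~\ref{thm bir1}, whose hypotheses include $(-K_X)^d\le v$ and the uniform klt condition on $(X,t_lL)$. In the paper both are verified using $\alpha(X)>\delta$; without a lower bound on $\alpha(X)$ you cannot check either. So effective birationality alone does not close the argument --- you are forced back to bounding $\alpha(X)$ from below, and for that the connectedness trick is the missing idea.
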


Logically, Theorem \ref{bdd1} is implied by Theorem \ref{bdd3}. But we will show Theorem \ref{bdd1} first in order to make the explanation more clear. 
\begin{remark}
Note that the invariant $\alpha(X)^d\cdot (-K_X)^d$ appears naturally in birational geometry, see for example \cite[Theorem 6.7.1]{SOP}. It is not clear whether we can replace  $\alpha(X)^d\cdot (-K_X)^d$ in Theorem \ref{bdd3} by $\alpha(X)^{d'}\cdot (-K_X)^d$ for some positive real number $d'<d.$ At least $d'\leq d-1$ is not sufficient to conclude the boundedness. For example, in Example \ref{ex}(1), $(-K_{X_n})^d=(n+d)^d/n$ and $\alpha(X_n)=1/(n+d)$ (for computation of alpha-invariants of toric varieties, see \cite[6.3]{Amb16}), hence $\alpha({X_n})^{d-1}\cdot (-K_{X_n})^d>1$.
\end{remark}
\begin{remark}
We remark that the proof of both Theorems \ref{bdd1} and \ref{bdd3} works under weaker assumption that $X$ is a {\it weak $\bQ$-Fano variety} (i.e., $X$ has at most klt singularities and $-K_X$ is nef and big), see also Remark \ref{remark1}. But it is not clear yet whether the log Fano pair versions hold or not.
\end{remark}

{\it Acknowledgment.} The topic of this paper was brought to the author by Yuchen Liu. The author would like to thank Yuchen Liu for inspiration and fruitful discussions.
The author is grateful to Jingjun Han, Yusuke Nakamura, and Taro Sano for discussions and to Kento Fujita, Yujiro Kawamata, Ivan Cheltsov, Wenhao Ou, and the referee(s) for valuable comments and suggestions. Part of this paper was written during the author enjoyed the workshops  ``NCTS Workshop on Singularities, Linear Systems, and Fano Varieties" at National Center for Theoretical Sciences  and   ``BICMR-Tokyo Algebraic Geometry Workshop" at Beijing International Center for Mathematical Research. The author is grateful for the hospitality and support of these institutes.

\section{Preliminaries}\subsection{Notation and conventions}\label{section notation}
 We adopt the standard notation and definitions in \cite{KMM} and \cite{KM}, and will freely use them.

A {\it pair} $(X, B)$ consists of  a normal projective variety $X$ and an effective
$\mathbb{R}$-divisor $B$ on $X$ such that
$K_X+B$ is $\mathbb{R}$-Cartier.   


Let $f: Y\rightarrow X$ be a log
resolution of the pair $(X, B)$, write
$$
K_Y =f^*(K_X+B)+\sum a_iF_i,
$$
where $\{F_i\}$ are distinct prime divisors. 
Take a real number $\epsilon\geq 0$. The pair $(X,B)$ is called
\begin{itemize}
\item[(a)] \emph{kawamata log terminal} (\emph{klt},
for short) if $a_i> -1$ for all $i$;

\item[(b)] \emph{log canonical} (\emph{lc}, for
short) if $a_i\geq  -1$ for all $i$;

\item[(c)] \emph{$\epsilon$-log canonical} (\emph{$\epsilon$-lc}, for
short) if $a_i\geq  -1+\epsilon$ for all $i$.

\end{itemize}
Usually we write $X$ instead of $(X,0)$ in the case $B=0$.
  
   $F_i$ is called a {\it non-klt place} of $(X, B)$  if $a_i\leq -1$.
A subvariety $V\subset X$ is called a {\it non-klt center} of $(X, B)$ if it is the image of a non-klt place. 

Let $(X, B)$ be  an lc pair and $D\geq 0$ be a $\bR$-Cartier $\bR$-divisor. The
{\it log canonical threshold} of $D$ with respect to $(X, B)$ is defined by
$$\lct(X, B; D) = \sup\{t\geq 0 \mid (X, B+ tD) \text{ is lc}\}.$$

If  $X$ is a $\bQ$-Fano variety,  the {\it  alpha-invariant} of $X$ is defined by 
$$
\alpha(X)=\inf\{\lct(X;D)\mid D\sim_\bQ-K_X, D\geq 0\}.
$$

A collection of varieties $\{X_t\}_{t\in T}$ is
said to be \emph{bounded}  if there exists $h:\mathcal{X}\rightarrow
S$ a projective morphism between schemes of finite type such that 
each $X_t$  is isomorphic to $\mathcal{X}_s$ for some $s\in S$.

\subsection{Volumes}

Let $X$ be a $d$-dimensional normal projective variety  and $D$ be a Cartier divisor on $X$. The {\it volume} of $D$ is the real number
$$
{\vol_X}(D)=\limsup_{m\rightarrow \infty}\frac{h^0(X,\OO_X(mD))}{m^d/d!}.
$$
Note that the limsup is actually a limit. Moreover by the homogenous property and continuity of  volumes, we can extend the definition to $\bR$-Cartier $\bR$-divisors. Note that if $D$ is a nef $\bR$-Cartier $\bR$-divisor, then $\vol_X(D)=D^d$. 

For more background on volumes, see \cite[2.2.C, 11.4.A]{Positivity2}. 

\subsection{Potentially birational divisors}
 Let $X$ be a normal projective variety and $D$ be a big $\bQ$-Cartier $\bQ$-divisor on $X$. We say that $D$ is {\it potentially birational} (see \cite[Definition 3.5.3]{ACC}) if for any two general points $x$ and $y$ of $X$, possibly switching $x$ and $y$, we can find an effective $\bQ$-divisor $\Delta \sim_\bQ (1-\epsilon)D$ for some $0<\epsilon<1$ such that $(X,\Delta)$ is not klt at $y$ but $(X,\Delta)$ is lc at $x$ and $\{x\}$ is a non-klt center. Note that if $D$ is potentially birational, then $|K_X + \roundup{D}|$ defines a birational map (\cite[Lemma
2.3.4]{HMX13}).
\subsection{Non-klt centers}
We recall the following proposition in  \cite{Bir16a} which is proved by standard techniques for constructing families of non-klt centers, see e.g. \cite{SOP, ACC, Bir16a}.
\begin{prop}[{cf. \cite[2.31(2), page 22]{Bir16a}}]\label{nonklt}
Let $X$ be a normal projective variety of dimension $d$ and $D$, $A$ be two ample $\bQ$-divisors. Assume $D^d > (2d)^d$. 

Then there is a bounded family of subvarieties of $X$ such that for two general points $x,y$ in $X$, there is a member $G$ of the family and an effective $\bQ$-divisor $\Delta\sim_\bQ D+(d-1)A$ such that
 \begin{itemize}
\item $(X,\Delta)$ is lc near $x$ with a unique non-klt place whose center contains $x$, that center is $G$, 
\item $(X, \Delta)$ is not klt at $y$, and 
\item either $\dim G= 0$ or $(A|_G)^{\dim G} \leq  d^d$.
\end{itemize}
\end{prop}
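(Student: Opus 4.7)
The plan is to implement the standard ``create non-klt center, tie-break, then cut down'' template going back to Angehrn--Siu, Koll\'ar, and Helmke, in the refined form used in \cite{SOP, ACC, Bir16a}.

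First I would use only $D$ to produce a divisor non-klt at both $x$ and $y$. Since $\vol(D)=D^d>(2d)^d$, a dimension count---$h^0(X,\OO_X(mD))$ grows like $D^d\,m^d/d!$, while forcing multiplicity $\geq 2dm$ at a fixed smooth point costs $\binom{2dm+d}{d}\sim(2d)^dm^d/d!$ conditions---gives, for any smooth general point $z\in X$, an effective $\bQ$-divisor $D_z\sim_\bQ D$ with $\mult_z D_z>2d$. Averaging, $\Delta_0:=\tfrac12(D_x+D_y)\sim_\bQ D$ has $\mult_x\Delta_0>d$ and $\mult_y\Delta_0>d$, so $(X,\Delta_0)$ is not klt at both points; a small perturbation absorbed into an arbitrarily small multiple of $A$ lets me reduce to $\Delta_0\sim_\bQ(1-\eta)D+\eta A$, leaving slack.

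Next I would apply tie-breaking at $x$: scaling $\Delta_0$ by its log canonical threshold at $x$ and perturbing with a general small-enough multiple of $A$ through $x$ produces $\Delta_1\sim_\bQ D+\eta_1 A$ (with $\eta_1$ arbitrarily small) such that $(X,\Delta_1)$ has a \emph{unique} non-klt place whose center $V_1\ni x$ is well-defined, while $(X,\Delta_1)$ remains non-klt at $y$. If $\dim V_1=0$ I set $G=V_1$ and stop. Otherwise I iterate: at step $i\geq 2$, if $(A|_{V_{i-1}})^{\dim V_{i-1}}\leq d^d$ stop with $G=V_{i-1}$; if instead $(A|_{V_{i-1}})^{\dim V_{i-1}}>d^d\geq(\dim V_{i-1})^{\dim V_{i-1}}$, the same dimension-count argument on $V_{i-1}$ yields $B\sim_\bQ A|_{V_{i-1}}$ with $\mult_x B>\dim V_{i-1}$, so $(V_{i-1},B)$ is not klt at $x$; Kawamata subadjunction then lifts $B$ to an effective $\bQ$-divisor on $X$ linearly equivalent to $A$ (modulo a boundary contribution one can control), and I tie-break again to produce $V_i\subsetneq V_{i-1}$ through $x$. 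Since $\dim V_1\leq d-1$, the process terminates in at most $d-1$ iterations, and the running divisor stays inside the class $D+(d-1)A$.

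Boundedness of the resulting collection of $G$'s is then automatic: with $X$ and $A$ fixed, the bound $(A|_G)^{\dim G}\leq d^d$ constrains the $A$-degree of $G$, and irreducible subvarieties of $X$ of bounded dimension and bounded degree form a bounded family via the Chow variety. The main obstacle I anticipate is the inductive cut-down step: one must verify that subadjunction genuinely transfers the divisor produced on $V_i$ back to a divisor on $X$ linearly equivalent to at most one new copy of $A$, that the new non-klt center after tie-breaking is \emph{strictly} contained in $V_i$ and still contains $x$, and that the general-position hypothesis on $x$ (for smoothness of $V_i$ there and applicability of the multiplicity count) is preserved through the induction. The numerology $D^d>(2d)^d$ at the start and $d^d$ at every subsequent step is calibrated precisely so that each of the $\leq d-1$ cut-down steps consumes exactly one copy of $A$, keeping the final divisor in $D+(d-1)A$.
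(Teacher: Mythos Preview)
The paper does not supply its own proof of this proposition; it is quoted verbatim from \cite[2.31(2)]{Bir16a}. Your outline is precisely the argument Birkar records there (itself the Angehrn--Siu/Koll\'ar cutting-down template as in \cite{SOP, ACC}), and your numerology is right: $D^d>(2d)^d$ handles the initial two-point step, and at most $d-1$ successive cuts, each triggered by $(A|_{V_i})^{\dim V_i}>d^d\geq(\dim V_i)^{\dim V_i}$ and each consuming one copy of $A$, keep the running divisor in $D+(d-1)A$.

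One correction of language in the inductive step: you do not ``lift $B$ by Kawamata subadjunction.'' Subadjunction runs from $X$ down to $V$, not the other way. What actually happens is that ampleness of $A$ on $X$ gives, for $m\gg 0$, surjectivity of $H^0(X,mA)\to H^0(V_{i-1},mA|_{V_{i-1}})$, so a divisor on $V_{i-1}$ with multiplicity $>\dim V_{i-1}$ at $x$ extends to some $\tilde B\sim_\bQ A$ on $X$; then \emph{inversion of adjunction} forces $(X,\Delta_{i-1}+\tilde B)$ to be non-lc at $x$, and tie-breaking yields the strictly smaller center $V_i\ni x$. This is exactly the ``main obstacle'' you flagged, and it is resolved in Birkar's write-up by this extension-plus-inversion mechanism rather than by subadjunction. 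With that terminological swap your sketch matches the cited proof.
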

\subsection{Birkar's results}
We recall several theorems from \cite{Bir16a}. The following theorem provides a criterion of boundedness of certain $\bQ$-Fano varieties, which is one of the key ingredients of \cite{Bir16a, Bir16b}.
\begin{thm}[{\cite[Proposition 7.13]{Bir16a}}]\label{thm bir1}
Let $d,m,v$ be positive integers and $t_l$ be a sequence of positive real numbers. Let $\PP$ be the set of projective varieties $X$ such that
\begin{itemize}
\item $X$ is a $\bQ$-Fano variety of dimension $d$, 
\item $K_X$ has an $m$-complement,
\item $| -mK_X |$ defines a birational map,
\item $(-K_X)^d \leq  v$, and
\item for any $l \in \bN$ and any $L \in | - lK_X |$, the pair $(X, t_l L)$ is klt. 
\end{itemize}
Then $\PP$ is a bounded family.
\end{thm}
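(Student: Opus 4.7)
The plan is a two-step reduction: first establish birational boundedness from the explicit pluri-anti-canonical map, then upgrade this to scheme-theoretic boundedness by using the $m$-complement and the klt hypothesis to control the singularities of $X$ together with an auxiliary boundary.

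For the first step, since $|-mK_X|$ defines a birational map and $(-K_X)^d\leq v$, the image of $X$ in $\bP^N$ under the map induced by $|-mK_X|$ has degree at most $m^d v$. Moreover $N+1=h^0(X,-mK_X)$ is bounded in terms of $d$, $m$, and $v$ by a Koll\'ar--Matsusaka-type estimate applied to the nef and big divisor $-mK_X$ on the klt variety $X$. Hence all birational images of members of $\PP$ sit in a fixed $\bP^N$ with bounded Hilbert polynomial, parametrised by a Hilbert scheme of finite type, giving birational boundedness.

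For the second step, take an $m$-complement $B$, so that $(X,B)$ is lc, $m(K_X+B)\sim 0$, and in particular $mB\in|-mK_X|$. Applying the last hypothesis with $l=m$ and $L=mB$ shows that $(X,\,m t_m B)$ is klt, so $\lct(X;B)\geq m t_m>0$. Thus $(X,B)$ is a klt log Calabi--Yau pair of bounded Cartier index $m$, whose underlying variety $X$ is birationally bounded and has bounded anti-canonical volume $(-K_X)^d\leq v$. At this point one invokes the boundedness theorem for klt log Calabi--Yau pairs with bounded index and bounded birational model (in the spirit of Hacon--McKernan--Xu, together with Birkar's refinements using $m$-complements) to conclude that the pairs $(X,B)$, and hence the varieties $X$, form a bounded family.

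The main obstacle is the second step: upgrading the \emph{generic} klt condition on pluri-anti-canonical linear systems into a rigid boundedness statement for the specific pair $(X,B)$ requires a careful match between the choice of the $m$-complement $B$ and the klt hypothesis, together with the nontrivial moduli-theoretic input for log Calabi--Yau pairs. Concretely, one needs to ensure that the klt threshold $m t_m$ controls the pair $(X,B)$ uniformly as $X$ moves in the Hilbert scheme parametrisation, so that specialisation preserves klt singularities and thereby produces a genuinely bounded family rather than merely a birationally bounded one.
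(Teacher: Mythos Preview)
This theorem is not proved in the paper at all: it is quoted verbatim from \cite[Proposition~7.13]{Bir16a} and used as a black box in the proof of Theorem~\ref{bdd1}. So there is no ``paper's own proof'' to compare against; your sketch is really a sketch of Birkar's argument.

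Your two-step outline (birational boundedness from $|{-}mK_X|$ and the volume bound, then an upgrade using complements and the klt hypothesis) is indeed the shape of Birkar's proof. However, the second step as you wrote it contains a genuine gap. From the hypothesis you only get that $(X,\,m t_m B)$ is klt, i.e.\ $\lct(X;B)\geq m t_m$. There is no assumption that $m t_m\geq 1$, so you cannot conclude that $(X,B)$ itself is klt; it is only lc by the definition of an $m$-complement. Hence the phrase ``$(X,B)$ is a klt log Calabi--Yau pair'' is unjustified, and the subsequent appeal to a boundedness theorem for klt log Calabi--Yau pairs does not apply as stated.

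What Birkar actually does in \cite{Bir16a} is more delicate: from the birationally bounded model one passes to a common log resolution $W$, pulls back $-K_X$ and decomposes it into moving and fixed parts, and then uses the klt hypothesis for \emph{every} $l$ (not just $l=m$) to bound the coefficients of the fixed part and control the singularities uniformly. This is what produces an $\epsilon$-lc type bound that, combined with the Hacon--M\textsuperscript{c}Kernan--Xu log birational boundedness machinery, yields genuine boundedness. Invoking a single value $l=m$ and a single complement $B$ is not enough; the full sequence $(t_l)$ is needed precisely to tame the linear systems $|{-}lK_X|$ as $l$ grows.
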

Here $K_X$ has an {\it $m$-complement} means that there exists an effective divisor $M\sim -mK_X$, such that $(X, \frac{1}{m}M)$ is lc. For definition of complements in general setting, we refer to \cite{Bir16a}. The boundedness of complements is proved by Birkar as the following theorem.
\begin{thm}[{\cite[Theorem 1.1]{Bir16a}}]\label{thm bir2}Let $d$ be a positive integer. Then there exists a positive integer $n$ depending only on $d$ such that if $X$ is a $\bQ$-Fano variety of dimension $d$, then $K_X$ has an $n$-complement. 
\end{thm}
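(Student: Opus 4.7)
The plan is to argue by induction on the dimension $d$. The base case $d=1$ is essentially trivial (every $\bP^1$ admits a $2$-complement given by two distinct points). For the inductive step, the overarching strategy is to produce a suitable log canonical boundary on (a birational model of) $X$ whose non-klt locus contains a prime divisor $S$, pull the problem back to $S$ via adjunction, invoke the lower-dimensional case, and then lift the resulting complement back to $X$.

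First, I would reduce to a setup in which there is an auxiliary boundary $\Delta$ on $X$ with $(X,\Delta)$ plt, $\lfloor \Delta\rfloor = S$ a prime divisor, and $-(K_X+\Delta)$ sufficiently positive (big, with a suitable ample part) so that lifting is possible. The natural tool for producing such $\Delta$ is to cut out non-klt centers using an effective $\bQ$-divisor $\bQ$-linearly equivalent to a bounded multiple of $-K_X$, then perform tie-breaking to reduce the non-klt locus to the single divisor $S$; Proposition \ref{nonklt} gives exactly the kind of center-creation mechanism needed, provided one first knows that $(-K_X)^d$ is bounded above, which must be imported from lower-dimensional boundedness statements in the induction.

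Second, apply divisorial adjunction: $(K_X+\Delta)|_S = K_S + \Delta_S$ where $\Delta_S$ is the standard different, a boundary with coefficients in a DCC set making $(S,\Delta_S)$ klt (in fact this is where the theory of generalized pairs is forced upon us, since the different naturally carries a nef part from any ample twist used in step one). By the inductive hypothesis, formulated for generalized pairs, $(S,\Delta_S)$ admits an $n_{d-1}$-complement. The delicate step is then to lift this complement to an $n$-complement on $X$ with $n$ bounded in terms of $n_{d-1}$ and $d$: one writes the lifted section using the exact sequence $0 \to \OO_X(\cdots - S) \to \OO_X(\cdots) \to \OO_S(\cdots) \to 0$ on a log resolution, applies Kawamata--Viehweg vanishing to the left-hand term (using the positivity of $-(K_X+\Delta)$), and concludes surjectivity of the restriction map.

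The main obstacle is precisely this lifting/surjectivity step. To make the vanishing work, the residual divisor after subtracting $S$ must be nef and big with a klt boundary; ensuring this requires maintaining strict control over the coefficients of $\Delta$ and of the pulled-back complement throughout all reductions, which is why Birkar is forced to work systematically within the category of generalized polarized pairs and to strengthen the inductive statement accordingly. A secondary obstacle is controlling the Cartier index of the resulting divisor so that the complement is honestly an $n$-complement in the integral sense; this bookkeeping of denominators propagates through every step and ultimately governs the size of $n$. A full proof is considerably beyond a sketch — my outline only indicates the principal moving parts of Birkar's argument in \cite{Bir16a}.
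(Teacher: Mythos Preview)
The paper does not contain a proof of this theorem at all: it is quoted verbatim from \cite[Theorem 1.1]{Bir16a} and used as a black box (see the proof of Theorem~\ref{bdd1}, where Theorems~\ref{thm bir2} and~\ref{eff bir} are simply invoked to verify the hypotheses of Theorem~\ref{thm bir1}). So there is no ``paper's own proof'' to compare your proposal against.

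Your sketch is, at a high level, a fair summary of the architecture of Birkar's argument in \cite{Bir16a}: induction on dimension, creation of a plt boundary with a prime divisorial center $S$, adjunction to $S$, an inductive $n$-complement on $S$, and lifting via Kawamata--Viehweg vanishing, all carried out in the category of generalized pairs. Two minor caveats. First, your appeal to Proposition~\ref{nonklt} as ``exactly the kind of center-creation mechanism needed'' is anachronistic in this context: Birkar's center-creation for complements is logically prior to the results of the present paper and does not proceed via an a priori upper bound on $(-K_X)^d$ fed in from lower-dimensional boundedness in the way you suggest. Second, the actual execution in \cite{Bir16a} runs to many dozens of pages precisely because the reductions (to Fano-type morphisms, to the plt case, control of coefficient sets and Cartier indices, and the systematic passage through generalized pairs) are each substantial, as you yourself acknowledge. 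But as a structural outline your proposal is accurate; it simply lies outside the scope of the present paper.
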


Recall that a $\bQ$-Fano variety $X$ is {\it exceptional} if $(X, D)$ is klt for any effective $\bQ$-divisor $D\sim_\bQ -K_X$. This is equivalent to say that $\alpha(X)>1$, because, if $\alpha(X)>1$ then clearly $X$ is exceptional by the definition (note that we only need this direction of the implication in this paper); on the other hand, if $X$ is {exceptional}, then it is easy to see that $\alpha(X)\geq 1$, but one can use \cite[Theorem 1.5]{Bir16b} to exclude the case $\alpha(X)=1$.

 Birkar proved the boundedness of exceptional $\bQ$-Fano varieties.
\begin{thm}[{\cite[Theorem 1.3]{Bir16a}}]\label{thm bir3}Let $d$ be a positive integer.  Then the set of exceptional $\bQ$-Fano varieties of dimension $d$ forms a bounded family.
\end{thm}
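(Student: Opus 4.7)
The strategy is to verify the hypotheses of the boundedness criterion Theorem \ref{thm bir1} uniformly over all $d$-dimensional exceptional $\bQ$-Fano varieties. Two hypotheses come essentially for free. By Theorem \ref{thm bir2}, there is an integer $n=n(d)$ such that every such $X$ admits an $n$-complement. The klt-type condition on sections is also immediate: for any $L \in |-lK_X|$ one has $\tfrac{1}{l}L \sim_\bQ -K_X$, so $\alpha(X)>1$ forces $(X,\tfrac{1}{l}L)$ to be klt, and we may take $t_l = 1/l$.

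It then remains to produce constants $v(d)$ and $m(d)$ with (a) $(-K_X)^d \leq v(d)$ and (b) $|-mK_X|$ birational. For (b), the plan is to apply Proposition \ref{nonklt} to $D = A = \lambda(-K_X)$ with $\lambda$ chosen so that $D^d > (2d)^d$; this produces a bounded covering family of non-klt centers $G$ separating general pairs of points. The exceptional condition is the crucial extra input: any boundary $\Delta \sim_\bQ -K_X$ gives a klt pair, so the tie-break and perturbation steps used to cut $G$ down to a point (by induction on $\dim G$) can freely modify the boundary without creating stray non-klt loci. Once $\dim G = 0$ one obtains a potentially birational multiple of $-K_X$, and the Hacon--McKernan lifting criterion recalled in Subsection 2.3 yields birationality of $|-mK_X|$ for a uniform $m = m(d)$.

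The main obstacle is (a), together with the implicit need to control $(-K_X)^d$ from below (so that the choice of $\lambda$ above is uniform in $d$ rather than in $X$). The upper bound can be obtained by invoking K-stability: $\alpha(X)>1$ implies K-semistability via \cite[Theorem 1.4]{OS12}, and K-semistable $\bQ$-Fanos of dimension $d$ satisfy Fujita's bound $(-K_X)^d \leq (d+1)^d$. A purely birational alternative would fix an $n$-complement $M\sim -nK_X$, use that $(X,\tfrac{1}{n}M)$ is a klt log Calabi--Yau pair, and bound volumes via multiplicities of $M$ at general points combined with the exceptional constraint. The lower bound on $(-K_X)^d$ is more delicate and appears to require a careful analysis of log discrepancies at generic non-klt centers, via a Noetherian induction on the dimension of the constructed centers; this is the technical heart of the argument and is where any new input beyond the tools already listed in the paper is needed.
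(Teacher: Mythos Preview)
The paper does not prove Theorem~\ref{thm bir3}: it is quoted from \cite[Theorem~1.3]{Bir16a} in the subsection ``Birkar's results'' and used only as a black box in the proof of Theorem~\ref{bdd3}, precisely to dispose of the case $\alpha(X)>1$. There is therefore no proof in this paper to compare your proposal against.

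On your sketch itself: the upper volume bound is much easier than you suggest. Invoking K-semistability and Fujita's inequality is unnecessary, since the elementary inequality $\alpha(X)\le d/\sqrt[d]{(-K_X)^d}$ (recorded in the proof of Theorem~\ref{bdd1}, via \cite[Theorem~6.7.1]{SOP}) immediately gives $(-K_X)^d<d^d$ from $\alpha(X)>1$, so one may take $v=d^d$.

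The genuine obstacle is exactly the one you flag at the end: a uniform \emph{lower} bound on $(-K_X)^d$. Without it the multiple $\lambda$ needed to make $(-\lambda K_X)^d>(2d)^d$ in Proposition~\ref{nonklt} depends on $X$, and the birationality step never gets off the ground. None of the tools assembled in this paper supply such a bound; Lemma~\ref{K.G} runs in the wrong direction (it bounds subvariety degrees below \emph{in terms of} the global volume), and Example~\ref{ex}(2) shows that along the boundary $\alpha=1$ the volume can already be arbitrarily small, so the strict inequality $\alpha>1$ must be doing serious work. Birkar's actual argument in \cite{Bir16a} does not proceed by first bounding the volume from below and then feeding this into Proposition~7.13; it passes through the full machinery developed there (effective birationality for $\epsilon$-lc Fanos, boundedness of complements, and a reduction tying exceptionality to an $\epsilon$-lc condition). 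Your outline does not close this gap, and with only the ingredients collected in the present paper it cannot.
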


\begin{remark}\label{remark1}
All theorems in this subsection hold for weak $\bQ$-Fano varieties.
\end{remark}
\section{Proof of the theorems}
The idea of proof of Theorem \ref{eff bir} is to construct isolated non-klt centers by $-K_X$, that is, for a general point $x\in X$, we need to construct an effective $\bQ$-divisor $\Delta\sim_\bQ -mK_X$ where $m$ is fixed, so that $(X, \Delta)$ has an isolated non-klt center at $x$, see e.g. \cite{AS95, HMX13, Bir16a}. From the lower bound of $(-K_X)^d$, it is easy to construct some non-klt center $G$ containing $x$. In order to cut down the dimension of $G$, we need to bound the volume of $-K_X|_G$. The main point of this paper is to show that the volume of $-K_X|_G$ is bounded from below by an expression in terms of $(-K_X)^d$ and $\alpha(X)$, as
the following lemma.
\begin{lem}\label{K.G}Fix two positive integers $d>k$. 
Let $X$ be a $\bQ$-Fano variety of dimension $d$. Assume there is a contraction  $f: Y\to T$  of projective varieties with a surjective morphism $\phi:Y\to X$. Assume that a general fiber $F$ of $f$ is of dimension $k$ and is mapped birationally onto its image $G$ in $X$, $\phi$ is smooth over $\phi(\eta_F)$, and $\phi(\eta_F)$ is a smooth point of $X$. Then
$$(-K_X)^k\cdot G\geq \frac{\alpha(X)^{d-k}}{\binom{d}{k}(d-k)^{d-k}}(-K_X)^d.$$
\end{lem}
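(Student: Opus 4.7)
The plan is to produce an effective $\bQ$-divisor $D\sim_{\bQ}-K_X$ on $X$ with large generic multiplicity along $G$, and then invoke log canonicity of $(X,\alpha(X)D)$ at the generic point of $G$. Set $v=(-K_X)^d$, $a=(-K_X)^k\cdot G$, and $\alpha=\alpha(X)$; it suffices to show $v\alpha^{d-k}\leq\binom{d}{k}(d-k)^{d-k}a$. I will fix $\gamma>0$ with $\gamma^{d-k}<v/(\binom{d}{k}a)$, prove $\alpha\gamma\leq d-k$, and then let $\gamma\to(v/(\binom{d}{k}a))^{1/(d-k)}$.

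The volume step asks, for all sufficiently divisible $m\gg 0$, for a nonzero section $s\in H^0\bigl(X,I_G^{\lceil\gamma m\rceil}\otimes\OO_X(-mK_X)\bigr)$. From the filtration $\OO_X\supset I_G\supset I_G^2\supset\cdots$ one obtains the telescoped inequality
$$h^0(X,I_G^n\otimes\OO_X(-mK_X))\geq h^0(X,\OO_X(-mK_X))-\sum_{j=0}^{n-1}h^0\bigl(G,(I_G^j/I_G^{j+1})\otimes\OO_X(-mK_X)|_G\bigr).$$
Since $\phi$ is smooth at $\eta_F$ and $\phi(\eta_F)$ is a smooth point of $X$, both $G$ and $X$ are smooth along $\eta_G:=\phi(\eta_F)$, so $I_G^j/I_G^{j+1}$ is a coherent sheaf on $G$ of generic rank $\binom{j+d-k-1}{d-k-1}$. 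Ampleness of $-K_X|_G$ with $(-K_X|_G)^k=a$, combined with standard Hilbert asymptotics, gives (for each fixed $j$) $h^0(G,(I_G^j/I_G^{j+1})\otimes\OO_X(-mK_X)|_G)=\binom{j+d-k-1}{d-k-1}am^k/k!+O(m^{k-1})$. Summing $j=0,\ldots,\lceil\gamma m\rceil-1$ via the identity $\sum_{j=0}^{N-1}\binom{j+d-k-1}{d-k-1}=\binom{N+d-k-1}{d-k}$, the total is bounded by $\binom{d}{k}a\gamma^{d-k}m^d/d!+o(m^d)$, which is strictly less than $h^0(X,-mK_X)=vm^d/d!+O(m^{d-1})$ by the choice of $\gamma$. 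The cleanest way to make the uniformity in $j$ precise is to blow up $X$ along $G$ with exceptional divisor $E$, reinterpret $h^0(X,I_G^n\otimes\OO_X(-mK_X))$ as the corresponding $h^0$ of $-m\pi^{*}K_X-nE$ on the blow-up, and expand the formal top self-intersection $(-\pi^{*}K_X-tE)^d$: the vanishings $\pi_{*}(E^j)=0$ for $j<d-k$ together with $\pi_{*}(E^{d-k})=(-1)^{d-k-1}[G]$ show that the first nonvanishing correction is $-\binom{d}{k}at^{d-k}$, giving the same threshold for positivity of the volume.

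Given such $s$, set $D=\tfrac{1}{m}\mathrm{div}(s)\sim_{\bQ}-K_X$, so $\mult_G D\geq\gamma$. By definition of the alpha-invariant, $(X,\alpha D)$ is log canonical. Since $X$ is smooth along $\eta_G$, the divisorial valuation given by order of vanishing along $G$ is realized on the blow-up $\pi:Z\to X$ of $G$ near $\eta_G$, with exceptional $E_G$ satisfying $K_Z=\pi^{*}K_X+(d-k-1)E_G$ and $\pi^{*}D=\tilde D+\mult_G(D)\,E_G$; hence the log discrepancy of $E_G$ for $(X,\alpha D)$ is $(d-k)-\alpha\mult_G D$, which must be $\geq 0$, yielding $\alpha\gamma\leq d-k$. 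Letting $\gamma$ tend to its supremum and rearranging gives the claimed inequality. The main obstacle is the uniform asymptotic in the volume step, where the error terms in the Hilbert polynomial for $(I_G^j/I_G^{j+1})\otimes\OO_X(-mK_X)|_G$ have to be controlled uniformly as $j$ grows linearly with $m$; the blow-up and volume formulation above is the most robust route.
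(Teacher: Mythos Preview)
Your overall strategy---produce $D\sim_\bQ -K_X$ with large $\mult_G D$ and then bound $\alpha(X)$ via the log discrepancy of the exceptional divisor over $G$---is exactly the paper's, and the final step deducing $\alpha\gamma\le d-k$ is correct. The gap is in the section-counting step, and you have already put your finger on it: the error terms in the Hilbert asymptotics for $(I_G^j/I_G^{j+1})\otimes\OO_X(-mK_X)|_G$ depend on $j$, and since $j$ runs up to $\lceil\gamma m\rceil$ you cannot sum them to $o(m^d)$ without further input. Your proposed fix via the blow-up of $X$ along $G$ does not close this gap. The identities $\pi_*\OO_Z(-nE)=I_G^n$, $\pi_*(E^j)=0$ for $j<d-k$, and $\pi_*(E^{d-k})=(-1)^{d-k-1}[G]$ require $G$ to be regularly embedded in the smooth locus of $X$, whereas the hypotheses guarantee smoothness of $G$ and $X$ only at the generic point $\eta_G$. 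Away from $\eta_G$ the sheaves $I_G^j/I_G^{j+1}$ can acquire embedded components or jump in rank, and the blow-up can be singular, so neither of your two routes yields the needed uniform estimate as written.

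The paper sidesteps this entirely by exploiting the family structure rather than the embedding $G\subset X$. After replacing $Y$ and $T$ by resolutions and cutting $T$ by general hyperplanes through $t$, one may assume $Y$ is smooth and $\phi:Y\to X$ is generically finite. Since $\phi$ is \'etale over $\eta_G$, there is an injection $\OO_X/\III_G^{\langle m\rangle}\hookrightarrow\phi_*(\OO_Y/\III_F^m)$ (with $\III_G^{\langle m\rangle}$ the symbolic power), so it suffices to bound $h^0(Y,\phi^*\OO_X(-lmK_X)\otimes\OO_Y/\III_F^m)$. Now $F$ is a fiber of a smooth morphism from a smooth variety, so its conormal sheaf is \emph{trivial}, $\III_F/\III_F^2\simeq\OO_F^{\oplus(d-k)}$, and hence $\III_F^{i-1}/\III_F^i\simeq\OO_F^{\oplus\binom{i+d-k-2}{d-k-1}}$ exactly. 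This converts your asymptotic estimate into an exact identity
\[
h^0\bigl(Y,\phi^*\OO_X(-lmK_X)\otimes\OO_Y/\III_F^m\bigr)=\binom{m+d-k-1}{d-k}\,h^0\bigl(F,\phi^*\OO_X(-lmK_X)|_F\bigr),
\]
with a single Hilbert function on the right; the uniformity problem simply does not arise. The missing idea, then, is to trade the possibly bad embedding $G\subset X$ for the perfect embedding $F\subset Y$ of a fiber in a smooth total space.
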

\begin{proof}
Taking normalizations and resolutions, we may assume $Y$ and $T$ are smooth. We may pick a general fiber $F$ over $t\in T$ such that $f$ is smooth over $t$. Cutting by general smooth hyperplane sections of $T$ containing $t$, we may assume $\phi$ is generically finite, here note that all the assumptions are preserved according to \cite[Lemma 2.28]{Bir16a}. In particular, $\dim Y=d$.

Fix any rational number $l$ such that 
$$l>\sqrt[d-k]{\binom{d}{k}\frac{(-K_X)^k\cdot G}{(-K_X)^d}},$$
take an integer $m$ such that $ml$ is an integer and $-lmK_X$ is Cartier.

Note that there is a natural injection $\OO_X /\III^{\langle m\rangle}_G \to \phi_*(\OO_Y /\III^m_F)$ by comparing the order of local regular functions since $\phi$ is \'etale over the generic point of $G$. Here $\III_F$ denotes the ideal sheaf of $F$ and  $\III^{\langle m\rangle}_G$ is the ideal of regular
functions vanishing along a general point of $G$ to order at least $m$.
By projection formula, this implies that 
\begin{align*}
{}&h^0(X, \OO_X(-lmK_X)\otimes  \OO_X /\III^{\langle m\rangle}_G)\\\leq {}& h^0(X, \OO_X(-lmK_X)\otimes \phi_*(\OO_Y /\III^m_F))\\
= {}&h^0(Y, \phi^*\OO_X(-lmK_X)\otimes \OO_Y /\III^m_F).
\end{align*}
On the other hand, since $F$ is a general fiber of $f$ and $Y$ is smooth, the conormal sheaf of $F$ is trivial, that is, $\III_F/\III_F^2\simeq \OO_F^{\oplus(d-k)}$, also we have $$\III_F^{i-1}/\III_F^{i}\simeq S^{i-1}(\III_F/\III_F^2)\simeq \OO_F^{\oplus\binom{i+d-k-2}{d-k-1}}$$ for $i\geq 1$ (see \cite[II. Theorem 8.24]{H}).
Hence 
\begin{align*}
{}&h^0(Y, \phi^*\OO_X(-lmK_X)\otimes \OO_Y /\III^m_F)\\
\leq {}& \sum_{i=1}^{m}h^0(Y, \phi^*\OO_X(-lmK_X)\otimes \III_F^{i-1} /\III_F^i)\\
={}&\sum_{i=1}^{m}\binom{i+d-k-2}{d-k-1}h^0(F, \phi^*\OO_X(-lmK_X)|_F)\\
={}&\binom{m+d-k-1}{d-k}h^0(F, \phi^*\OO_X(-lmK_X)|_F).
\end{align*}
Here for the last equality, we use the formula that for positive integers $a$ and $b$,
$$\sum_{i=1}^b\binom{i+a-1}{a}=\binom{a+b}{a+1}.
$$
This can be shown by induction on $b$.

By the exact sequence
$$
0\to \OO_X(-lmK_X)\otimes \III^{\langle m\rangle}_G \to\OO_X(-lmK_X)\to \OO_X(-lmK_X)\otimes  \OO_X /\III^{\langle m\rangle}_G\to 0,
$$
we have
\begin{align*}
{}&h^0( X, \OO_X(-lmK_X)\otimes \III^{\langle m\rangle}_G )\\
\geq {}&h^0(X, \OO_X(-lmK_X))-h^0(X,  \OO_X(-lmK_X)\otimes  \OO_X /\III^{\langle m\rangle}_G)\\
\geq {}&h^0(X, \OO_X(-lmK_X))-\binom{m+d-k-1}{d-k}h^0(F, \phi^*\OO_X(-lmK_X)|_F).
\end{align*}
Note that  by definition of volumes,
$$
\lim_{m\to \infty} \frac{d!}{m^d}h^0(X, \OO_X(-lmK_X))=\vol_X(\OO_X(-lK_X))=(-lK_X)^d,
$$
and 
\begin{align*}
{}&\lim_{m\to \infty} \frac{d!}{m^d}\binom{m+d-k-1}{d-k}h^0(F, \phi^*\OO_X(-lmK_X)|_F)\\
={}&\lim_{m\to \infty} \frac{d!}{m^d}\cdot\frac{(m+d-k-1)\cdots (m+1)m}{(d-k)!}h^0(F, \phi^*\OO_X(-lmK_X)|_F)\\
={}&\lim_{m\to \infty} \frac{d!}{m^k(d-k)!}h^0(F, \phi^*\OO_X(-lmK_X)|_F)\\
={}&\binom{d}{k}\vol_F(\phi^*\OO_X(-lK_X)|_F)\\
={}&\binom{d}{k}(-\phi^*(lK_X)|_F)^k\\
={}&\binom{d}{k}(-\phi^*(lK_X))^k\cdot F\\
={}&\binom{d}{k}(-lK_X)^k\cdot G.
\end{align*}
Here for the last step we use the fact that $F\to G$ is birational (cf. \cite[Proposition 1.35(6)]{KM}).
Note that by choice of $l$,
$$
(-lK_X)^d=l^d(-K_X)^d>l^k \binom{d}{k}\frac{(-K_X)^k\cdot G}{(-K_X)^d} (-K_X)^d=\binom{d}{k}(-lK_X)^k\cdot G.
$$
Hence 
$$
h^0( X, \OO_X(-lmK_X)\otimes \III^{\langle m\rangle}_G )>0
$$
for $m$ sufficiently large.
This implies that there exists an effective $\bQ$-divisor $D\sim_\bQ -lK_X$ such that $\mult_G D\geq 1.$ In particular, $(X, (d-k)D)$ is not klt along $G$ as $G$ interests the smooth locus of $X$ (cf. \cite[Lemma 2.29]{KM}). Hence
$$
(d-k)l\geq \lct\bigg(X; \frac{1}{l}D\bigg)\geq  \alpha(X).
$$
Since we can take $l$ to be an arbitrary rational number such that 
$$l>\sqrt[d-k]{\binom{d}{k}\frac{(-K_X)^k\cdot G}{(-K_X)^d}},$$
it follows that
$$
(d-k)\sqrt[d-k]{\binom{d}{k}\frac{(-K_X)^k\cdot G}{(-K_X)^d}}\geq  \alpha(X),
$$
that is,
$$
(-K_X)^k\cdot G\geq \frac{ \alpha(X)^{d-k}}{\binom{d}{k}(d-k)^{d-k}}(-K_X)^d.
$$
The proof is completed.
\end{proof}

\begin{proof}[Proof of Theorem \ref{eff bir}]
Take a $d$-dimensional $\bQ$-Fano variety $X$ with $(-K_X)^d>\delta$ and $\alpha(X)>\delta$.  
Take 
\begin{align*}
q_0{}&=\frac{2d}{\sqrt[d]{\delta}},\\
p_0{}&=\max_{1\leq k \leq d-1}\left\{\sqrt[k]{\frac{\binom{d}{k}(d-k)^{d-k}d^d}{\delta^{d-k+1}}}\right\}.
\end{align*}
Take roundups $q=\roundup{q_0}$ and $p=\roundup{p_0}$.
 By definition, $(-qK_X)^d>(2d)^d$.

Applying Proposition \ref{nonklt} for $D=-qK_X$ and $A=-pK_X$, then there is a bounded family of subvarieties of $X$ such that for two general points $x,y$ in $X$, there is a member $G$ of the family and an effective $\bQ$-divisor $\Delta\sim_\bQ D+(d-1)A$ such that $(X,\Delta)$ is lc near $x$ with a unique non-klt place whose center contains $x$, that center is $G$, and $(X, \Delta)$ is not klt at $y$, and either $\dim G= 0$ or $(A|_G)^{\dim G} \leq  d^d$. We will show that the latter case will never happen, that is, $\dim G= 0$ always holds for general $x,y$.

 Note that since $x,y$ are general, we can assume $G$ is a general member of the family. Recall from \cite[2.27]{Bir16a} that this means the family is given by finitely many morphisms $V_j \to T_j$ of projective varieties with surjective morphisms $V_j \to X$ such that each $G$ is a general fiber of one of these morphisms.
 If $G$ is a general fiber of some $V_j \to T_j$ of dimension $k>0$, as $V_j \to T_j$ is constructed from the Hilbert scheme of subvarieties (cf. \cite[2.27, 2.31]{Bir16a}), it satisfies the assumptions of Lemma \ref{K.G}, and then by applying Lemma \ref{K.G} to $V_j \to T_j$ and $G$, we get
$$
(-K_X)^k\cdot G\geq  \frac{ \alpha(X)^{d-k}}{\binom{d}{k}(d-k)^{d-k}}(-K_X)^d> \frac{\delta^{d-k+1}}{\binom{d}{k}(d-k)^{d-k}}.
$$
In particular, by the definition of $p$, 
$$
(A|_G)^{\dim G}=(-pK_X)^k\cdot G> d^d.
$$
This is a contradiction.

Hence $\dim G=0$  for general $x,y$,
 that is, $G=\{x\}$. Recall our construction, this means that
 for any two general points $x,y \in X$ we can choose an effective $\bQ$-divisor $\Delta\sim_\bQ D+(d-1)A= -(q+p(d-1))K_X$ so that $(X,\Delta)$ is lc near $x$ with a unique non-klt place whose center is  $\{x\}$, and $(X,\Delta)$ is not klt at $y$. Hence $-({q+p(d-1)}+1)K_X$ is potentially birational and hence $|K_X-({q+p(d-1)}+1)K_X|$ defines a birational map by \cite[Lemma
2.3.4]{HMX13}. We may take $m={q+p(d-1)}$.
\end{proof}

\begin{proof}[Proof of Theorem \ref{bdd1}]By Theorem \ref{thm bir1}, it suffices to show that there exist  positive integers $m,v$ and  a sequence of positive real numbers $t_l$ depending only on $d$ and $\delta$ such that if $X$ is a $d$-dimensional $\bQ$-Fano variety with $(-K_X)^d>\delta$ and $\alpha(X)>\delta$, then the conditions in  Theorem \ref{thm bir1} are satisfied, that is, 
\begin{enumerate}
\item $K_X$ has an $m$-complement,
\item $| -mK_X |$ defines a birational map,
\item $(-K_X)^d \leq  v$, and
\item for any $l \in \bN$ and any $L \in | - lK_X |$, the pair $(X, t_l L)$ is klt. 
\end{enumerate}

Firstly, by Theorems \ref{thm bir2} and \ref{eff bir}, there exists a positive integer $m$ depending only on $d$ and $\delta$ such that $K_X$ has an $m$-complement and $| -mK_X |$ defines a birational map.

Secondly, it is well-known (cf. \cite[Theorem 6.7.1]{SOP}) that $$\alpha(X)\leq \frac{d}{ \sqrt[d]{(-K_X)^d} }.$$ 
In fact, for any rational number $s$ such that $s>\frac{d}{ \sqrt[d]{(-K_X)^d}}$, we have $$(-sK_X)^d>d^d.$$
By \cite[Theorem 6.7.1]{SOP}, there exists an effective $\bQ$-divisor $B\sim_\bQ-sK_X$ such that $(X, B)$ is not lc. Hence $\alpha(X)<s$. By the arbitrarity of $s$, we conclude that $$\alpha(X)\leq \frac{d}{ \sqrt[d]{(-K_X)^d} }.$$ 
Hence
$$
(-K_X)^d\leq \frac{d^d}{\alpha(X)^d}< \frac{d^d}{\delta^d}
$$
and we may take $v=d^d/\delta^d$.

Finally, for any $l \in \bN$ and any $L \in | - lK_X |$, the pair $(X,  \frac{\delta}{l}L)$ is klt since $\alpha(X)>\delta$. We may take $t_l=\delta/l$.

In summary, by Theorem \ref{thm bir1}, the set of $d$-dimensional $\bQ$-Fano varieties $X$ with $(-K_X)^d>\delta$ and $\alpha(X)>\delta$ forms a bounded family.
\end{proof}

\begin{proof}[Proof of Theorem \ref{bdd3}]
Take a $d$-dimensional $\bQ$-Fano variety $X$ with $\alpha(X)^d\cdot (-K_X)^d>\theta$. We want to apply Theorem \ref{bdd1} in this situation, that is, it suffices to show that there exists a real number $\delta>0$ depending only on $d$ and $\theta$ such that $(-K_X)^d>\delta$ and $\alpha(X)>\delta$.

Firstly, note that if $\alpha(X)>1$, $X$ is an exceptional $\bQ$-Fano variety and hence belongs to a bounded family by Theorem \ref{thm bir3}. 

Hence from now on, we may assume that $\alpha(X)\leq 1$. In particular, 
$$
(-K_X)^d>\frac{\theta}{\alpha(X)^d}\geq \theta.
$$

Take 
\begin{align*}
q_0{}&=\frac{2d}{\sqrt[d]{\theta}},\\
p_0{}&=\max_{1\leq k \leq d-1}\left\{\sqrt[k]{\frac{\binom{d}{k}(d-k)^{d-k}d^d}{\theta}}\right\}.
\end{align*}
Take roundups $q=\roundup{q_0}$ and $p=\roundup{p_0}$.
 By definition, $$(-q\alpha(X) K_X)^d\geq \frac{(2d)^d}{\theta}\cdot \alpha(X)^d\cdot (-K_X)^d>(2d)^d.$$

Applying Proposition \ref{nonklt} for $D=-q\alpha(X)K_X$ and $A=-p\alpha(X)K_X$, then there is a bounded family of subvarieties of $X$ such that for two general points $x,y$ in $X$, there is a member $G$ of the family and an effective $\bQ$-divisor $\Delta\sim_\bQ D+(d-1)A$ such that $(X,\Delta)$ is lc near $x$ with a unique non-klt place whose center contains $x$, that center is $G$, and $(X, \Delta)$ is not klt at $y$, and either $\dim G= 0$ or $(A|_G)^{\dim G} \leq  d^d$. We will show that the latter case will never happen, that is, $\dim G= 0$ always holds for general $x,y$.

 Note that since $x,y$ are general, we can assume $G$ is a general member of the family. Recall from \cite[2.27]{Bir16a} that this means the family is given by finitely many morphisms $V_j \to T_j$ of projective varieties with surjective morphisms $V_j \to X$ such that each $G$ is a general fiber of one of these morphisms.
 If $G$ is a general fiber of some $V_j \to T_j$ of dimension $k>0$, as $V_j \to T_j$ is constructed from the Hilbert scheme of subvarieties (cf. \cite[2.27, 2.31]{Bir16a}), it satisfies the assumptions of Lemma \ref{K.G}, and then by applying Lemma \ref{K.G} to $V_j \to T_j$ and $G$, we get
$$
(-K_X)^k\cdot G\geq  \frac{ \alpha(X)^{d-k}}{\binom{d}{k}(d-k)^{d-k}}(-K_X)^d> \frac{\theta}{\binom{d}{k}(d-k)^{d-k}\alpha(X)^k}.
$$
In particular, by the definition of $p$, 
$$
(A|_G)^{\dim G}=(-p\alpha(X)K_X)^k\cdot G> d^d.
$$
This is a contradiction.

Hence $\dim G=0$  for general $x,y$,
 that is, $G=\{x\}$. Recall our construction, this means that
 for any two general points $x,y \in X$ we can choose an effective $\bQ$-divisor $\Delta\sim_\bQ D+(d-1)A= -(q+p(d-1))\alpha(X)K_X$ so that $(X,\Delta)$ is lc near $x$ with a unique non-klt place whose center is  $\{x\}$, and $(X,\Delta)$ is not klt at $y$. This means that the non-klt locus (i.e. union of all non-klt centers) $\text{Nklt}(X, \Delta)$ contains $y$ and $x$ such that $x$ is an isolated point. By Shokurov--Koll\'ar connectedness lemma (see Shokurov \cite{Sho93, Sho94} and Koll\'ar \cite[Theorem 17.4]{Kol92}), $-(K_X+\Delta)$ can not be ample. On the other hand, 
 $$
 -(K_X+\Delta)\sim_\bQ -(1-(q+p(d-1))\alpha(X))K_X.
 $$
 As $-K_X$ is ample, this implies that $1-(q+p(d-1))\alpha(X)\leq 0$, that is, $$\alpha(X)\geq \frac{1}{q+p(d-1)}. $$ Hence we may take $\delta=\min\{\theta, 1/2(q+p(d-1))\}$ and apply Theorem \ref{bdd1} to conclude Theorem \ref{bdd3}.
\end{proof}
\begin{proof}[Proof of Theorem \ref{bdd2}]Without loss of generality, we may assume $\delta<1/(d+1)$. 
For a K-semistable $\bQ$-Fano variety $X$ of dimension $d$, by \cite[Theorem 3.5]{FO}, $\alpha(X) \geq {1}/{(d+1)}>\delta$. Hence Theorem \ref{bdd2} follows immediately from Theorem \ref{bdd1}.
\end{proof}
\begin{proof}[Proof of Corollary \ref{vol finite}]
By Theorem \ref{bdd2}, the set of $d$-dimensional K-semistable $\bQ$-Fano varieties $X$ with $(-K_X)^d>\delta$ forms a bounded family, hence $(-K_X)^d$ can only take finitely many possible values for such $\bQ$-Fano varieties. 
\end{proof}


\begin{thebibliography}{KMM87}
\bibitem[Amb16]{Amb16} F. Ambro, {\it Variation of log canonical thresholds in linear systems}, Int. Math. Res. Not. IMRN 2016, no. 14, 4418--4448.
\bibitem[AS95]{AS95} U. Angehrn, Y. T. Siu, {\it Effective freeness and point separation for adjoint bundles}, Invent. Math. {\bf 122} (1995), no.2, 291--308.
\bibitem[Bir16a]{Bir16a}  C. Birkar, {\it Anti-pluricanonical systems on Fano varieties}, arXiv:1603.05765v3.
\bibitem[Bir16b]{Bir16b} C. Birkar,  {\it Singularities of linear systems and boundedness of Fano varieties}, arXiv:1609.05543.
\bibitem[Ber16]{Ber16} R. Berman, {\it K-polystability of $\bQ$-Fano varieties admitting K\"ahler--Einstein
metrics}, Invent. Math. {\bf 203} (2016), no. 3, 973--1025.

\bibitem[CS11]{CS11} I. Cheltsov, C. Shramov, {\it On exceptional quotient singularities}, Geom. Topol. {\bf 15} (2011), no. 4, 1843--1882.
\bibitem[CS13]{CS13} I. Cheltsov, C. Shramov,  {\it Del Pezzo zoo}, Exp. Math. {\bf 22} (2013), no. 3, 313--326.
\bibitem[CS14]{CS14} I. Cheltsov, C. Shramov, {\it Weakly-exceptional singularities in higher dimensions}, J. Reine Angew. Math. {\bf 689} (2014), 201--241.
\bibitem[CPS10]{CPS10} I. Cheltsov, J. Park, C. Shramov, {\it Exceptional del Pezzo hypersurfaces}, J. Geom. Anal. {\bf 20} (2010), no. 4, 787--816.
\bibitem[CDS15a]{CDS15a} X. Chen, S. Donaldson, S. Sun, {\it K\"ahler--Einstein metrics on Fano
manifolds, I: approximation of metrics with cone singularities}, J. Amer. Math.
Soc. {\bf 28} (2015), no. 1, 183--197.
\bibitem[CDS15b]{CDS15b} X. Chen, S. Donaldson, S. Sun, {\it K\"ahler--Einstein metrics on Fano
manifolds, II: limits with cone angle less than $2\pi$}, J. Amer. Math. Soc. {\bf 28}
(2015), no. 1, 199--234.
\bibitem[CDS15c]{CDS15c} X. Chen, S. Donaldson, S. Sun, {\it K\"ahler--Einstein metrics on Fano
manifolds, III: limits as cone angle approaches $2\pi$ and completion of the main
proof}, J. Amer. Math. Soc. {\bf 28} (2015), no. 1, 235--278.
\bibitem[CT08]{CT08} X. Chen, G. Tian, {\it Geometry of K\"ahler metrics and foliations by holomorphic
discs}, Publ. Math. Inst. Hautes \'Etudes Sci. {\bf 107} (2008), 1--107.


\bibitem[Dem08]{Dem08} J.-P. Demailly, {\it On Tian's invariant and log
canonical thresholds}, Appendix to ``Log canonical thresholds of smooth Fano threefolds" by I. Cheltsov, C. Shramov, Russian Math. Surveys {\bf 63} (2008), no. 5, 945--950.

\bibitem[DT92]{DT} W. Ding, G. Tian, {\it K\"ahler--Einstein metrics and the generalized Futaki
invariant}, Invent. Math. {\bf 110} (1992), no. 2, 315--335.

\bibitem[Don02]{Don02} S. Donaldson, {\it Scalar curvature and stability of toric varieties}, J. Differential
Geom. {\bf 62} (2002), no. 2, 289--349.
\bibitem[Don05]{Don05} S. Donaldson, {\it Lower bounds on the Calabi functional}, J. Differential Geom.
{\bf 70} (2005), no. 3, 453--472.


\bibitem[FO16]{FO} K. Fujita, Y. Odaka, {\it On the K-stability of Fano varieties and anticanonical
divisors}, arXiv:1602.01305, to appear in Tohoku Math. J..

\bibitem[HMX13]{HMX13} C. D. Hacon, J. M\textsuperscript{c}Kernan, C. Xu, {\it On the birational automorphisms of varieties of general type}, Ann. of Math. (2) {\bf 177} (2013), no. 3, 1077--1111.

\bibitem[HMX14]{ACC} C. D. Hacon, J. M\textsuperscript{c}Kernan, C. Xu, {\it ACC for log canonical thresholds}, Ann. of Math. (2) {\bf 180} (2014), no. 2, 523--571.
\bibitem[Har77]{H} R. Hartshorne, \emph{Algebraic geometry}, Graduate Texts in Mathematics, No. 52, Springer-Verlag, New York, 1977.

\bibitem[JK01]{JK01}J. M. Johnson,  J. Koll\'ar, {\it K\"ahler--Einstein metrics on log del Pezzo surfaces in weighted projective 3-spaces}, Ann. Inst. Fourier {\bf 51} (2001), 69--79.

\bibitem[KMM87]{KMM} Y. Kawamata, K. Matsuda, K. Matsuki, {\em Introduction
to the minimal model problem}, Algebraic geometry, Sendai, 1985,  pp. 283--360,
Adv. Stud. Pure Math., 10, North-Holland, Amsterdam, 1987.
\bibitem[Kol92]{Kol92} J. Koll\'ar, {\it Adjunction and discrepancies}, Flips and abundance for algebraic threefolds, A Summer Seminar on Algebraic Geometry (Salt Lake City, Utah, August 1991), Ast\'erisque {\bf 211} (1992), 183--192.
 \bibitem[Kol97]{SOP} J. Koll\'ar,  \emph{Singularities of pairs}, Algebraic geometry--Santa Cruz 1995, pp. 221--287, Proc. Sympos. Pure Math., 62, Part 1, Amer. Math. Soc., Providence, RI, 1997.
 
\bibitem[KMM92]{KMM92}J. Koll\'ar, Y. Miyaoka, S. Mori, {\it Rationally connectedness and boundedness of Fano manifolds}, J. Differential Geom.  {\bf 36} (1992), no. 3,  765--769.
\bibitem[KM98]{KM} J. Koll\'{a}r, S. Mori, \emph{Birational geometry of algebraic varieties},
Cambridge tracts in mathematics, 134, Cambridge University
Press, Cambridge, 1998.
\bibitem[Laz04]{Positivity2}  R. Lazarsfeld, \emph{Positivity in algebraic geometry, I, II.} Ergeb. Math. Grenzgeb, 3. Folge.
A Series of Modern Surveys in Mathematics [Results in Mathematics and Related Areas.
3rd Series. A Series of Modern Surveys in Mathematics], 48\&49. Berlin: Springer 2004.


\bibitem[LWX14]{LWX14} C. Li, X. Wang, C. Xu, {\it On proper moduli space of smoothable K\"ahler--Einstein Fano
varieties}, arXiv:1411.0761v3.

\bibitem[LX17]{LX17} Y. Liu, C. Xu, {\it K-stability of cubic threefolds},  arXiv:1706.01933.
 \bibitem[Mab08]{Mab08} T. Mabuchi, {\it K-stability of constant scalar curvature}, arXiv:0812.4903.
\bibitem[Mab09]{Mab09} T. Mabuchi, {\it A stronger concept of K-stability}, arXiv:0910.4617.

\bibitem[Oda15]{Oda15} Y. Odaka, {\it Compact moduli spaces of K\"ahler--Einstein Fano varieties}, Publ.
Res. Inst. Math. Sci. {\bf 51} (2015), no. 3, 549--565.
\bibitem[OS12]{OS12} Y. Odaka, Y. Sano, {\it Alpha invariants and K-stability of $\bQ$-Fano varieties},
Adv. Math. {\bf 229} (2012), no. 5, 2818--2834.
\bibitem[OSS16]{OSS16} Y. Odaka, C. Spotti, S. Sun, {\it Compact moduli spaces of Del Pezzo
surfaces and K\"ahler--Einstein metrics}, J. Differential Geom. {\bf 102} (2016), no. 1,
127--172.

\bibitem[Sho93]{Sho93} V. V. Shokurov, {\it 3-fold log flips}, Russian Acad. Sci. Izv. Math. {\bf 40} (1993), no. 1, 95--202.
\bibitem[Sho94]{Sho94} V. V. Shokurov, {\it An addendum to the paper ``3-fold log flips" [Russian Acad. Sci. Izv. Math. {\bf 40} (1993), no. 1, 95--202]}, Russian Acad. Sci. Izv. Math. {\bf 43} (1994), no. 3, 527--558.
\bibitem[SSY16]{SSY16} C. Spotti, S. Sun, C. Yao, {\it Existence and deformations of K\"ahler--Einstein metrics on smoothable
$\bQ$-Fano varieties}, Duke Math. J. {\bf 165} (2016), no. 16, 3043--3083.
\bibitem[Sto09]{Sto09} J. Stoppa, {\it K-stability of constant scalar curvature K\"ahler manifolds}, Adv.
Math. {\bf 221} (2009), no. 4, 1397--1408.
\bibitem[Tia87]{Tia87} G. Tian, {\it On K\"ahler--Einstein metrics on certain K\"ahler manifolds with
$C_1(M) > 0$}, Invent. Math. {\bf 89} (1987), no. 2, 225--246.
\bibitem[Tia97]{Tia97} G. Tian, {\it K\"ahler--Einstein metrics with positive scalar curvature}, Invent.
Math. {\bf 130} (1997), no. 1, 1--37.
\bibitem[Tia15]{Tia15} G. Tian, {\it K-stability and K\"ahler--Einstein metrics}, Comm. Pure Appl. Math.
{\bf 68} (2015), no. 7, 1085--1156.
\end{thebibliography}
\end{document}